\title{On weakly \'etale morphisms}
\author{Aise Johan de Jong and Noah Olander}
\date{}
\newcounter{theorem}
\newtheorem{thm}[theorem]{Theorem}
\newtheorem{corollary}[theorem]{Corollary}
\newtheorem{prop}[theorem]{Proposition}
\newtheorem{lemma}[theorem]{Lemma}
\theoremstyle{definition}
\newtheorem{example}[theorem]{Example}
\newtheorem{remark}[theorem]{Remark}
\newtheorem*{ack}{Acknowledgements}
\begin{document}

\maketitle

\begin{abstract}
    We show that the weakly \'etale morphisms, used to define the pro-\'etale site of a scheme, are characterized by a lifting property similar to the one which characterizes formally \'etale morphisms. In order to prove this, we prove a theorem called Henselian descent which is a ``Henselized version" of the fact that a scheme defines a sheaf for the fpqc topology. Finally, we show that weakly \'etale algebras over regular rings arising in geometry are ind-\'etale and that weakly \'etale algebras do not always lift along surjective ring homomorphisms.
\end{abstract}

\section{Introduction} 

Let $f : X \to Y$ be a morphism of schemes. We say $f$ has the \emph{Henselian lifting property} if for every solid commutative diagram
\begin{equation}
\label{equn-1}
    \begin{tikzcd}
    \mathrm{Spec} (A / I) \ar[r]\ar[d] & X \ar[d, "f"]  \\
    \mathrm{Spec} (A) \ar[r]\ar[ur, dashed, "\exists !"] & Y .
    \end{tikzcd}
\end{equation}
where $(A,I)$ is a Henselian pair, there exists a unique dashed arrow making the diagram commute.

\begin{thm}
\label{thm-weakly-etale-lifts}
A morphism of schemes has the Henselian lifting property if and only if it is weakly \'etale. 
\end{thm}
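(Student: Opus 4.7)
I plan to reduce to the affine case (both properties being appropriately local on source and target) and then prove each direction separately. Affinely, the statement becomes: a ring map $A \to B$ is weakly \'etale if and only if for every Henselian pair $(R, I)$ and every commutative square given by maps $A \to R$ and $B \to R/I$, there is a unique $A$-algebra map $B \to R$ lifting $B \to R/I$.

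For the direction ``weakly \'etale implies Henselian lifting,'' I base change along $A \to R$ to reduce to the case $A = R$, so the problem becomes: given a weakly \'etale $R$-algebra $B$ and a retraction $B/IB \to R/I$ of $R/I \to B/IB$, produce a unique retraction $B \to R$ lifting it. My plan here is to invoke the Henselian descent theorem established earlier in the paper, which says that any scheme is a sheaf for an appropriate Henselian topology generated by faithfully flat Henselian covers. I pass to a suitable cover $(R, I) \to (R', IR')$ over which $R' \to B \otimes_R R'$ becomes ind-\'etale (using a structural result on weakly \'etale morphisms over Henselian pairs), where the section lifts uniquely by the classical lifting of \'etale algebras along Henselian pairs, applied filteredly. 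Henselian descent then glues this lift back down to $R$. The \textbf{main obstacle} is identifying the right Henselian cover that trivializes the weakly \'etale map, and verifying the sheaf hypotheses of Henselian descent are met by the functor of sections.

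For the direction ``Henselian lifting implies weakly \'etale,'' I first observe that any pair $(A, I)$ with $I$ nilpotent is automatically Henselian, so the Henselian lifting property specializes to the defining property of formal \'etaleness; in particular $\Omega_{B/A} = 0$. Flatness of $A \to B$ and of $B \to B \otimes_A B$ (the two conditions defining weak \'etaleness) are each extracted by applying the Henselian lifting property to carefully chosen Henselian pairs arising from Henselizations of local rings and valuation rings that receive maps from $A$, together with standard criteria for flatness (testing via base change to valuation rings, or equivalently via vanishing of $\mathrm{Tor}_1$ against cyclic modules). This direction is more formal in nature; the real content of the theorem lies in the ``$\Leftarrow$'' direction, where Henselian descent plays the essential role.
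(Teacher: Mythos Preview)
Your overall architecture for the $\Leftarrow$ direction (pass to a faithfully flat weakly \'etale cover of the Henselian base, lift there, then invoke Henselian descent) matches the paper's, but two of your specific steps do not work as stated. First, the Henselian lifting property is \emph{not} obviously Zariski-local on the source: given a diagram as in (\ref{equn-1}), the map $\mathrm{Spec}(A/I) \to X$ need not land in any affine open of $X$, so your opening ``reduce to the affine case'' is unjustified. The paper instead base changes to make $Y = \mathrm{Spec}(A)$, reduces $X$ to being quasi-compact and quasi-separated, and then chooses an \'etale atlas $\mathrm{Spec}(B) \to X$. Second, the cover you posit---an $(R,I) \to (R',IR')$ over which $B \otimes_R R'$ becomes \emph{ind-\'etale}---is not what the Bhatt--Scholze structure theorem provides, and I do not see why such a cover should exist. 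What the paper actually does is subtler: the atlas $g:\mathrm{Spec}(B) \to X$ already furnishes a tautological element of $X(B^h)$, but its restriction to $\mathrm{Spec}(B/IB)$ need not agree with the pullback of the given $t:\mathrm{Spec}(A/I) \to X$. One modifies $B$ (via \cite[Theorem 2.3.4 and Lemma 2.2.12]{MR3379634}) to force this agreement, and then a diagram chase using Henselian descent for the functor $\mathrm{Mor}_A(-,X)$ together with the uniqueness coming from the flat diagonal produces the section over $A$.

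Your $\Rightarrow$ direction has a genuine gap. You correctly note that nilpotent thickenings are Henselian, so formal \'etaleness (hence $\Omega_{B/A}=0$) follows, but this says nothing about flatness in the absence of finite presentation. Your proposed flatness test via base change to valuation rings requires finite presentation hypotheses that are simply not available here, and ``vanishing of $\mathrm{Tor}_1$ against cyclic modules'' is a restatement of flatness, not a method to prove it. The paper's argument is a specific trick you should know: choose a surjection $P \twoheadrightarrow S$ with $P$ a polynomial $R$-algebra, let $P^h$ be the Henselization of $P$ along the kernel, and apply the Henselian lifting property to the square with $P^h \to S$ on the left to obtain a section $S \to P^h$. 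Then $S$ is a direct summand of the flat $R$-module $P^h$, hence flat. Flatness of the diagonal then follows formally since the diagonal inherits the Henselian lifting property. So contrary to your assessment, this direction is not ``more formal''; it hinges on this one idea.
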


Recall that a morphism of schemes is weakly \'etale if it is flat and its diagonal is flat. The schemes weakly \'etale over a scheme $S$ form the underlying category of the pro-\'etale site $S_{pro\text{-}\acute{e}t}$ of $S$ as defined in \cite{MR3379634}. Theorem \ref{thm-weakly-etale-lifts} is the natural analogue of the characterization of \'etale morphisms as formally \'etale morphisms which are locally of finite presentation. 

Let $(A, I)$ be a Henselian pair. Let $A \to B$ be a ring map which is weakly \'etale and faithfully flat. Denote $B^h$ and $(B \otimes _A B)^h$ the Henselizations of $B$ and $B \otimes _A B$ with respect to $IB$ and $I(B \otimes _A B)$. 

\begin{thm}
\label{thm-henselian-descent}
Let $X$ be a scheme. With notation as above, the diagram
\begin{equation}
\label{equation-sheaf}
    X(A) \to X(B^h) \rightrightarrows X((B \otimes _A B)^h)
\end{equation}
is an equalizer. 
\end{thm}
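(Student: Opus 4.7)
The plan is to reduce to $X$ affine, establish faithful flatness of $A \to B^h$ to apply classical fpqc descent, and then compare the resulting equalizer with \eqref{equation-sheaf} via a Henselization argument.

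Since $X$ is a Zariski sheaf, it suffices to prove the equalizer property after passage to an affine open cover of $X$. Given $\sigma \in X(B^h)$ with matching pullbacks to $X((B \otimes_A B)^h)$, I would pull back an affine open cover of $X$ along $\sigma$ to obtain an open cover of $\mathrm{Spec}(B^h)$, refine to a finite principal cover, and reduce to $X = \mathrm{Spec}(R)$. In the affine case the statement becomes the identification $A = \mathrm{eq}\bigl(B^h \rightrightarrows (B \otimes_A B)^h\bigr)$ as rings.

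Next, I would show $A \to B^h$ is faithfully flat. Flatness is clear: $A \to B$ is flat and $B \to B^h$ is a filtered colimit of \'etale maps. For surjectivity on $\mathrm{Spec}$, the Henselian hypothesis gives $I \subseteq \mathrm{Jac}(A)$, so every prime of $A$ specializes into $V(I)$; since $B^h/IB^h = B/IB$ and $A \to B$ is faithfully flat, primes in $V(I) \subseteq \mathrm{Spec}(A)$ lift to $B^h$, and going-down for the flat map $A \to B^h$ handles the remaining primes. Classical fpqc descent then yields $A = \mathrm{eq}(B^h \rightrightarrows B^h \otimes_A B^h)$.

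The final step is to compare $B^h \otimes_A B^h$ with $(B \otimes_A B)^h$. The universal property of Henselization, applied to each factor using that $(B \otimes_A B)^h$ is Henselian along $I$, produces a canonical map $B^h \otimes_A B^h \to (B \otimes_A B)^h$, and a short universal-property check identifies the target with the Henselization of $B^h \otimes_A B^h$ along $I(B^h \otimes_A B^h)$. In particular this map is flat and an isomorphism modulo $I$. The main obstacle is to upgrade this to faithful flatness, equivalently, to show that $I$ lies in the Jacobson radical of $B^h \otimes_A B^h$. This is where the weakly \'etale hypothesis is indispensable: since $A \to B^h \otimes_A B^h$ is weakly \'etale (composition of weakly \'etale maps, base-changed) and $(A, I)$ is Henselian, one expects the Henselian pair structure of $(A, I)$ to extend to $(B^h \otimes_A B^h, I(B^h \otimes_A B^h))$, in analogy with the classical fact that \'etale extensions of Henselian pairs are Henselian. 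Once faithful flatness is established, the sheaf property of $X$ gives an injection $X((B \otimes_A B)^h) \hookrightarrow X(B^h \otimes_A B^h)$, and combining this with the fpqc descent above closes the proof.
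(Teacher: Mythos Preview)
Your approach has a genuine gap at the decisive step. You want $B^h \otimes_A B^h \to (B \otimes_A B)^h$ to be faithfully flat, equivalently $I \subseteq \mathrm{Jac}(B^h \otimes_A B^h)$, and you justify this by the heuristic that a weakly \'etale extension of a Henselian pair should again be Henselian, ``in analogy with the classical fact that \'etale extensions of Henselian pairs are Henselian.'' But that heuristic is false on its face: $A \to B$ itself is weakly \'etale and $(B, IB)$ need not be Henselian --- that is precisely why one passes to $B^h$ in the first place. So the very hypotheses of the theorem already supply counterexamples to your principle, and you have given no argument for why the extra structure of $B^h \otimes_A B^h$ (each tensor factor separately Henselian along $I$) forces $I$ into its Jacobson radical. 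The paper remarks immediately after stating the theorem that it does \emph{not} follow formally from fpqc descent because $B^h \otimes_A B^h \neq (B \otimes_A B)^h$ in general; your proposal is an attempt to make it follow almost formally via injectivity of the comparison map, and it stalls on exactly this point. (Incidentally, the injection you need runs the other way: you want $X(B^h \otimes_A B^h) \hookrightarrow X((B \otimes_A B)^h)$, so that equality of the two pullbacks in the Henselized tensor product forces equality in the ordinary one.)

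The paper's proof avoids this comparison entirely. It reduces to $X$ of finite type over $\mathbf{Z}$ by absolute Noetherian approximation, so that $X$ represents a classical sheaf $\mathcal{F}$ on $\mathrm{Spec}(A)_{pro\text{-}\acute{e}t}$. Gabber's affine analogue of proper base change (packaged as Lemma~\ref{lemma-gabber}) then identifies $X(B^h)$, $X((B\otimes_A B)^h)$ with sections of $i^{-1}\mathcal{F}$ over $\mathrm{Spec}(B/IB)$, $\mathrm{Spec}((B/IB)\otimes_{A/I}(B/IB))$, and the desired equalizer becomes the sheaf condition for $i^{-1}\mathcal{F}$ with respect to the pro-\'etale cover $\mathrm{Spec}(B/IB) \to \mathrm{Spec}(A/I)$. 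The real input is Gabber's theorem, not any direct control over $B^h \otimes_A B^h$. Your reduction to affine $X$ is also too quick: pulling back an affine open cover of $X$ along $\sigma$ produces opens of $\mathrm{Spec}(B^h)$, not of $\mathrm{Spec}(A)$, and descending those opens to $A$ is itself an instance of the statement you are trying to prove.
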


Theorem \ref{thm-henselian-descent} arises in the course of the proof of Theorem \ref{thm-weakly-etale-lifts} but we think it is of independent interest. We call it Henselian descent as it resembles the fact that a scheme defines a sheaf for the fpqc topology. Note that Theorem \ref{thm-henselian-descent} does not follow formally from this since one need not have $B^h \otimes _A B^h = (B \otimes _A B)^h$.

\begin{thm}
\label{thm-weakly-etale-over-regular}
Let $A$ be an excellent regular domain which contains a field. Let $A \to B$ be a weakly-\'etale ring map. Then $A \to B$ is ind-\'etale. 
\end{thm}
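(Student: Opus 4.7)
My plan is to combine Olivier's theorem on weakly \'etale extensions of strictly Henselian local rings with Artin approximation (available because $A$ is excellent) and the Henselian descent theorem (Theorem \ref{thm-henselian-descent}). It will suffice to show that every finitely generated $A$-subalgebra $B_0 \subset B$ is contained in an \'etale $A$-subalgebra of $B$; passing to the filtered colimit over all such $B_0$ then exhibits $B$ as a filtered colimit of \'etale $A$-algebras, which is the definition of ind-\'etale.

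Fix $B_0 = A[b_1, \ldots, b_n] \subset B$. By Olivier's theorem, for each prime $\mathfrak{q} \subset B$ lying over $\mathfrak{p} \subset A$, the localization $A_\mathfrak{p} \to B_\mathfrak{q}$ is weakly \'etale and induces an isomorphism $A_\mathfrak{p}^{sh} \xrightarrow{\sim} B_\mathfrak{q}^{sh}$ of $A$-algebras. Each $b_i$ thus determines, at each prime $\mathfrak{p}$, an element of the ind-\'etale $A$-algebra $A_\mathfrak{p}^{sh}$, which by the definition of strict Henselization already comes from an actual \'etale $A$-subalgebra equipped with a suitable geometric point. I would then use Artin approximation on the excellent Henselizations $A_\mathfrak{p}^h$ to replace these formal data by honest factorizations $B_0 \to C_\mathfrak{p} \to B$ through \'etale $A$-algebras $C_\mathfrak{p}$ defined in \'etale neighborhoods of $\mathfrak{p}$. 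The regularity hypothesis ensures the strict Henselizations are particularly clean (regular local rings), and the ``contains a field'' assumption places us in the equicharacteristic setting where Artin approximation and Popescu-type desingularization apply. The local \'etale algebras $C_\mathfrak{p}$ are then glued into a single \'etale $A$-algebra $C$ with factorization $B_0 \to C \to B$ by invoking Theorem \ref{thm-henselian-descent}: the locally constructed maps agree after passing to the relevant Henselian overlaps, so they descend.

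The main obstacle I expect is the gluing step. The local \'etale approximations produced by Artin approximation are determined only up to the ``close enough'' imprecision built into that theorem, and one must upgrade this approximate compatibility to genuine equality on overlaps in order for Theorem \ref{thm-henselian-descent} to produce a global \'etale $A$-algebra. This is presumably why the Henselian descent theorem arises naturally in the proof of this result, as already hinted at in the remark following Theorem \ref{thm-henselian-descent}; each of the three hypotheses on $A$ enters here, with excellence and ``contains a field'' feeding the approximation machinery, and regularity guaranteeing that the target strict Henselizations are simple enough for the approximations to land in actual \'etale algebras rather than merely in formal neighborhoods.
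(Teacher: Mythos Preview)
Your approach diverges substantially from the paper's, and the gap you flag in the gluing step is real; I do not see how to close it along the lines you suggest.

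First, a misreading: the remark after Theorem~\ref{thm-henselian-descent} says it arises in the proof of Theorem~\ref{thm-weakly-etale-lifts} (the lifting characterization), not of the present theorem. The paper's proof of Theorem~\ref{thm-weakly-etale-over-regular} uses neither Henselian descent nor Artin approximation.

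Instead, the paper argues as follows. Starting as you do with a finite-type $A$-subalgebra $C \subset B$, one replaces $C$ by its integral closure in $C \otimes_A K$ (finite, by excellence) to arrange that $C$ is normal. The key input is then a refined purity statement (Theorem~\ref{thm-afineness}, \cite[\href{https://stacks.math.columbia.edu/tag/0ECD}{Tag 0ECD}]{stacks-project}): for $\mathrm{Spec}(C) \to \mathrm{Spec}(A)$ of finite type with $C$ normal and $A$ excellent regular over a field, the \'etale locus $V \subset \mathrm{Spec}(C)$ is an \emph{affine} open, say $V = \mathrm{Spec}(D)$. One then checks via Olivier's theorem (packaged as Lemma~\ref{lemma-unramified}) that the image of $\mathrm{Spec}(B) \to \mathrm{Spec}(C)$ lies in $V$, so $C \to B$ factors through the \'etale $A$-algebra $D$. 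No gluing is needed.

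Regarding your plan: even before gluing, your local step does not obviously produce a factorization $B_0 \to C_{\mathfrak p} \to B$. Olivier gives $B_0 \to C \to A_{\mathfrak p}^{sh} = B_{\mathfrak q}^{sh}$ with $C$ \'etale over $A$, but there is no reason the map $C \to B_{\mathfrak q}^{sh}$ should land in $B$ (or even in $B_{\mathfrak q}$), and it is unclear what Artin approximation statement would repair this. As for gluing, Theorem~\ref{thm-henselian-descent} descends morphisms to a \emph{fixed} target scheme; it does not glue a varying family of \'etale algebras $C_{\mathfrak p}$ together with non-canonical maps $B_0 \to C_{\mathfrak p} \to B$, and your local data carry no evident cocycle condition. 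Finally, your attribution of the hypotheses is off: Artin approximation requires only excellence, not a ground field; in the paper the hypothesis ``contains a field'' enters solely through the affineness-of-\'etale-locus theorem.
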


We use Theorem \ref{thm-weakly-etale-over-regular} to show that there exists a surjective ring map $A \to A/I$ and a weakly \'etale $A/I$-algebra $\overline{B}$ which does not lift to a weakly \'etale $A$-algebra. On the other hand, Bhatt and Scholze proved in \cite[Lemma 2.2.12]{MR3379634} that every ind-\'etale $A/I$-algebra lifts to an ind-\'etale $A$-algebra.

\begin{ack}
We are grateful to Bhargav Bhatt and Shizhang Li for a conversation which led to the results of Section \ref{section-last}.
\end{ack}

\section{Reinterpretation of a result of Gabber} 

In this section we translate some of Gabber's results about \'etale cohomology of Henselian pairs in \cite{gabber-affine-proper} to statements about pro-\'etale cohomology. The default topology under consideration in this section is the pro-\'etale topology and all cohomology groups and pullbacks are with respect to this topology unless otherwise stated. Recall that for a scheme $S$ there is a morphism of sites $\nu : S_{pro\text{-}\acute{e}t} \to S_{\acute{e}t}$ and pro-\'etale sheaves in the essential image of the pullback functor $\nu^{-1}$ are called \emph{classical}, see \cite[Definition 5.1.3]{MR3379634}.

\begin{example}
\label{example-finite-type}
Let $X$ be a scheme locally of finite type over $\mathbf{Z}$. Then the functor $U \mapsto X(U) = \mathrm{Mor}(U, X)$ defines a classical sheaf on $S_{pro\text{-}\acute{e}t}$. More generally, if $X \to S$ is a morphism locally of finite presentation, then the functor $U \mapsto \mathrm{Mor}_{S} (U, X)$ defines a classical sheaf on $S_{pro\text{-}\acute{e}t}$. This follows from the characterization of classical sheaves \cite[Lemma 5.1.2]{MR3379634} and the functorial description of being locally of finite presentation \cite[\href{https://stacks.math.columbia.edu/tag/01ZC}{Tag 01ZC}]{stacks-project}.
\end{example}

\begin{remark}
\label{remark-sections}
If $\mathcal{F}$ is a sheaf on $S_{\acute{e}t}$ and $U \to S$ is an \'etale morphism of schemes then $\Gamma (U, \nu^{-1}\mathcal{F}) = \Gamma(U, \mathcal{F})$ and if $\mathcal{F}$ is an abelian sheaf then $H^p(U, \nu^{-1}\mathcal{F}) = H^p(U, \mathcal{F})$ for every $p$, where in both equations the right hand side is \'etale cohomology and the left is pro-\'etale cohomology. See \cite[Lemma 5.1.2, Corollary 5.1.6]{MR3379634}.
\end{remark}

\begin{lemma}
\label{lemma-gabber}
Let $(A,I)$ be a Henselian pair. Denote $i : \mathrm{Spec}(A/I) \to \mathrm{Spec}(A)$ the inclusion. Let $A \to B$ be a weakly \'etale ring map. Let $\mathcal{F}$ be a classical sheaf on $\mathrm{Spec}(A)_{pro\text{-}\acute{e}t}$. Let $B^h$ denote the Henselization of $B$ with respect to $IB$. Then 
$$
\Gamma (\mathrm{Spec}(B^h), \mathcal{F}) = \Gamma (\mathrm{Spec}(B/IB), i^{-1}\mathcal{F}).
$$
If in addition $\mathcal{F}$ is a torsion abelian sheaf, then
$$
H^p(\mathrm{Spec}(B^h), \mathcal{F}) = H^p(\mathrm{Spec}(B/IB), i^{-1}\mathcal{F})
$$
for every $p$.
\end{lemma}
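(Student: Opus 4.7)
The plan is to reduce everything to Gabber's theorem on \'etale cohomology of Henselian pairs by exploiting that $\mathcal{F}$ is classical. Write $\mathcal{F} = \nu^{-1}\mathcal{G}$ for some sheaf $\mathcal{G}$ on $\mathrm{Spec}(A)_{\text{\'et}}$, taking $\mathcal{G}$ torsion abelian in the second part of the statement. The map $B \to B^h$ is ind-\'etale, hence weakly \'etale, so $A \to B^h$ is weakly \'etale and $\mathrm{Spec}(B^h)$ is an object of $\mathrm{Spec}(A)_{pro\text{-}\acute{e}t}$. Let $f \colon \mathrm{Spec}(B^h) \to \mathrm{Spec}(A)$ denote the structure morphism.

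The first step is to identify the left-hand side with an \'etale-cohomological quantity on $\mathrm{Spec}(B^h)$. By compatibility of $\nu^{-1}$ with pullback (see \cite[Lemma 5.1.2]{MR3379634}), the restriction $f^{-1}\mathcal{F}$ to $\mathrm{Spec}(B^h)_{pro\text{-}\acute{e}t}$ equals $\nu^{-1}(f_{\text{\'et}}^{-1}\mathcal{G})$, and is again classical. Applying Remark \ref{remark-sections} to the identity morphism of $\mathrm{Spec}(B^h)$ (\'etale over itself) yields
$$
\Gamma(\mathrm{Spec}(B^h), \mathcal{F}) = \Gamma(\mathrm{Spec}(B^h)_{\text{\'et}}, f_{\text{\'et}}^{-1}\mathcal{G}),
$$
and the analogous equality on $H^p$ when $\mathcal{G}$ is torsion abelian.

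The second step is to invoke Gabber's theorem for the Henselian pair $(B^h, IB^h)$: reduction modulo $IB^h$ induces a bijection on $\Gamma$ for an arbitrary \'etale sheaf, and on $H^p$ for a torsion abelian \'etale sheaf. Since $B^h/IB^h = B/IB$ (a standard property of the Henselization of a pair), and since the composition $A \to B \to B^h \to B^h/IB^h$ coincides with $A \to A/I \to B/IB$, the resulting \'etale sheaf on $\mathrm{Spec}(B/IB)$ is the \'etale pullback of $i_{\text{\'et}}^{-1}\mathcal{G}$. Running the dictionary of the previous paragraph in reverse over $\mathrm{Spec}(A/I)$ identifies this with $\Gamma(\mathrm{Spec}(B/IB), i^{-1}\mathcal{F})$, and similarly for $H^p$.

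The substance of the proof is Gabber's theorem, which is available in the stated generality: for $p = 0$ the statement is elementary and holds for an arbitrary sheaf (reduction modulo $I$ being an equivalence of categories of \'etale algebras over a Henselian pair), and for $p > 0$ it is the main result of \cite{gabber-affine-proper}, which is where the torsion hypothesis enters. The main obstacle is thus just the bookkeeping needed to confirm that the several pullback functors involved ($i^{-1}$, $f^{-1}$, $\nu^{-1}$ and their \'etale analogues) commute with one another as required, all of which follows formally from \cite[Lemma 5.1.2]{MR3379634}.
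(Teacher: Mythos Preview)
Your proof is correct and follows essentially the same route as the paper's: pull $\mathcal{F}$ back to $\mathrm{Spec}(B^h)$, use that the pullback is again classical to pass to the \'etale site, apply Gabber's theorem to the Henselian pair $(B^h, IB^h)$, and then unwind. The only cosmetic difference is that the paper first isolates the case $A = B$ (which is exactly Remark~\ref{remark-sections} plus Gabber) and then reduces the general case to it, whereas you unroll this into a single pass; the paper also cites \cite[Lemma~5.4.1]{MR3379634} rather than Lemma~5.1.2 for the preservation of classicality under pullback, and \cite[Lemma~4.2.7]{MR3379634} for the identification of sections over an object with global sections of the restriction, which you use implicitly.
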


\begin{remark}
The statement makes sense since $\mathrm{Spec}(B^h)$ is an object of the pro-\'etale site of $\mathrm{Spec}(A)$ whose pullback to $\mathrm{Spec}(A/I)_{pro\text{-}\acute{e}t}$ is $\mathrm{Spec}(B/IB)$. This is because $B \to B^h$ is ind-\'etale and $B/IB = B^h/IB^h$.
\end{remark}

\begin{proof}
If $A = B$ then in view of Remark \ref{remark-sections} this follows from Gabber's results \cite[Theorem 1, Remark 2]{gabber-affine-proper}. A small point is that if $\mathcal{F}$ is an abelian torsion sheaf which is classical then it is $\nu^{-1}$ of an abelian torsion sheaf on $S_{\acute{e}t}$ as follows from Remark \ref{remark-sections}. For general $A \to B$, let $f : \mathrm{Spec}(B^h) \to \mathrm{Spec}(A)$ and $j : \mathrm{Spec}(B/IB) \to \mathrm{Spec}(B^h)$ denote the obvious morphisms. Then $f^{-1}\mathcal{F}$ is a classical sheaf on $\mathrm{Spec}(B^h)_{pro\text{-}\acute{e}t}$ by \cite[Lemma 5.4.1]{MR3379634} so we know by the first sentence of the proof that 
$$
\Gamma (\mathrm{Spec}(B^h), f^{-1}\mathcal{F}) = \Gamma(\mathrm{Spec}(B/IB), j^{-1}f^{-1}\mathcal{F}).
$$
Since $\mathrm{Spec}(B^h)$ is an object of the pro-\'etale site of $\mathrm{Spec}(A)$ the left hand side is just $\Gamma (\mathrm{Spec}(B^h), \mathcal{F})$ by \cite[Lemma 4.2.7]{MR3379634}. Similarly, if $\bar{f}$ denotes the morphism $\mathrm{Spec}(B/IB) \to \mathrm{Spec}(A/I)$ then the right hand side is
$$
\Gamma (\mathrm{Spec}(B/IB), \bar{f}^{-1} i^{-1} \mathcal{F}) = \Gamma (\mathrm{Spec}(B/IB), i^{-1}\mathcal{F}),
$$
since $\mathrm{Spec}(B/IB)$ is an object of $\mathrm{Spec}(A/I)_{pro\text{-}\acute{e}t}$. If $\mathcal{F}$ is an abelian torsion sheaf, the same argument verbatim with $\Gamma$ replaced by $H^p$ proves the second statement. 
\end{proof}

\section{Henselian descent}

In this section, $(A, I)$ always denotes a Henselian pair. For a weakly \'etale $A$-algebra $B$, we let $B^h$ denote the Henselization of $B$ with respect to $IB$. For such a $B$ we obtain a diagram
\begin{equation}
\label{equation-equalizer}
 A \to B^h \rightrightarrows (B \otimes _A B)^h .    
\end{equation}
Our goal is to prove Theorem \ref{thm-henselian-descent} which implies in particular (by taking $X = \mathbf{A}^1_{\mathbf{Z}}$) that if $A \to B$ is weakly \'etale and faithfully flat, then (\ref{equation-equalizer}) is an equalizer.

\begin{lemma}
\label{lemma-faithfully-flat}
If $A \to B$ is weakly \'etale and faithfully flat, then so is $A \to B^h$. If $f: X \to \mathrm{Spec}(A)$ is a flat morphism of schemes whose image contains $V(I)$, then $f$ is faithfully flat. 
\end{lemma}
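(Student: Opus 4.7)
The plan is to prove the second statement first, then use it to deduce the first. For the second statement, I would rely on the fact that the defining ideal of a Henselian pair lies in the Jacobson radical, i.e. $I \subseteq \mathrm{rad}(A)$. Consequently every maximal ideal of $A$ belongs to $V(I)$, so every closed point of $\mathrm{Spec}(A)$ lies in the image of $f$. Now any prime $\mathfrak{p} \subseteq A$ is contained in some maximal ideal $\mathfrak{m}$, giving a specialization from the point corresponding to $\mathfrak{p}$ to the one corresponding to $\mathfrak{m}$. Since $f$ is flat, going down holds and the image of $f$ is stable under generization; thus $\mathfrak{p}$ also lies in the image, and $f$ is surjective, hence faithfully flat.

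For the first statement, I would argue in two moves. First, $A \to B^h$ is flat because $A \to B$ is flat by hypothesis and $B \to B^h$ is flat (the Henselization with respect to $IB$ is in fact ind-\'etale over $B$, which is more than enough). Second, I would verify that the image of the induced map $g : \mathrm{Spec}(B^h) \to \mathrm{Spec}(A)$ contains $V(I)$: since Henselization does not change the quotient modulo the ideal, we have $B/IB = B^h/IB^h$, so the image of $g$ contains $\mathrm{Spec}(B/IB) \to \mathrm{Spec}(A)$, and the latter surjects onto $V(I) = \mathrm{Spec}(A/I)$ because $A \to B$ is faithfully flat (so is $A/I \to B/IB$). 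With both ingredients in hand, the second statement applied to $g$ finishes the proof.

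There is no serious obstacle here; both parts reduce to standard facts, and the only thing to be careful about is invoking the right property of Henselian pairs (the containment $I \subseteq \mathrm{rad}(A)$) and the right property of Henselization (flatness, or ind-\'etaleness, of $B \to B^h$).
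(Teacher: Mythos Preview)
Your proposal is correct and follows essentially the same route as the paper: prove the second statement via $I \subseteq \mathrm{rad}(A)$ and stability of the image of a flat map under generization, then deduce the first from it using $B^h/IB^h = B/IB$ and flatness of $B \to B^h$. One small point: the lemma asks that $A \to B^h$ be weakly \'etale (not just flat), which you have implicitly since $B \to B^h$ is ind-\'etale and weakly \'etale maps are stable under composition; it would be worth saying this explicitly.
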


\begin{proof}
Since $A \to B \to B^h$ are all weakly \'etale (in particular, flat) and $A/I \to B^h/IB^h = B/IB$ is faithfully flat the first statement will follow if we prove the second. The second statement is true because $f(X)$ is closed under generalization and contains $V(I)$, and $I$ is contained in the Jacobson radical of $A$ so every point of $\mathrm{Spec}(A)$ is the generalization of a point of $V(I)$.
\end{proof}


\begin{proof}[Proof of Theorem \ref{thm-henselian-descent}]
By Lemma \ref{lemma-faithfully-flat} and faithfully flat descent, it follows that $X(A) \to X(B^h)$ is injective, so the problem is to show that an element $g \in X(B^h)$ equalized by the two arrows to $X((B \otimes _A B)^h)$ comes from an element of $X(A)$. 

\underline{Step 1.} Reduction to the case $X$ is quasi-compact and quasi-separated. 

We immediately reduce to the case $X$ is quasi-compact by replacing $X$ with a quasi-compact open containing the image of $g : \mathrm{Spec}(B^h) \to X$. A quasi-compact scheme $X$ when viewed as an fppf sheaf can be written as a filtered colimit $\mathrm{colim}_{i} X_i$ with $X_i$ a quasi-compact and quasi-separated scheme and each $X_i \to X$ a local isomorphism. In particular, for every quasi-compact scheme $T$ one has $X(T) = \mathrm{colim}_i X_i(T)$. See \cite[\href{https://stacks.math.columbia.edu/tag/03K0}{Tag 03K0}]{stacks-project}. Then if we know (\ref{equation-sheaf}) is exact when $X$ is replaced by $X_i$, taking the colimit over $i$ proves the result since filtered colimits are exact in the category of sets. 

\underline{Step 2.} Reduction to the case $X$ is of finite type over $\mathbf{Z}$. 

Now assume $X$ is a quasi-compact and quasi-separated scheme. Then by absolute Noetherian approximation \cite[\href{https://stacks.math.columbia.edu/tag/01ZA}{Tag 01ZA}]{stacks-project}, $X$ is a cofiltered limit with affine transition maps $\mathrm{lim}_{i \in I} X_i$ where each $X_i$ is a scheme of finite type over $\mathbf{Z}$. Then if we know (\ref{equation-sheaf}) is exact when $X$ is replaced by $X_i$, taking the limit over $i$ proves the result for $X$. 

\underline{Step 3.} Proof when $X$ is of finite type over $\mathbf{Z}$.

By Example \ref{example-finite-type}, $X$ represents a classical sheaf $\mathcal{F}$ on the pro-\'etale site of $\mathrm{Spec}(A)$. Let $i : \mathrm{Spec}(A/I) \to \mathrm{Spec}(A)$ denote the inclusion. Then by Lemma \ref{lemma-gabber}, the diagram (\ref{equation-sheaf}) is identified with
$$
\Gamma (\mathrm{Spec}(A/I), i^{-1}\mathcal{F}) \to \Gamma (\mathrm{Spec}(B/IB), i^{-1}\mathcal{F}) \rightrightarrows \Gamma (\mathrm{Spec}(B/IB \otimes _{A/I} B / IB), i^{-1}\mathcal{F})
$$
which is an equalizer since $i^{-1}\mathcal{F}$ is a sheaf on the pro-\'etale site of $\mathrm{Spec}(A/I)$.
\end{proof}

In analogy with classical descent theory, it is natural to ask whether the complex
$$
\check{\mathcal{C}}(B/A)^h =  B^h \to (B \otimes _A B)^h \to (B \otimes _A B \otimes _A B)^h \to  \cdots 
$$
is exact in positive degrees. This is the complex obtained from the Amitsur complex $\check{\mathcal{C}}(B/A)$ by Henselizing the terms $B \otimes _A \cdots \otimes _A B$ with respect to $I(B \otimes _A \cdots \otimes _A B)$. In fact, we will show now that this exactness can fail even for standard Zariski coverings $B = A_{f_1} \times \cdots \times A_{f_n}$ where $f_i \in A$ generate the unit ideal.

\begin{example}
\label{example-higher-cohomology}
According to the blog post \cite{dejong_2018} there is a sheaf of rings $\mathcal{O}^h$ on the Zariski site of $\mathrm{Spec}(A/I)$ characterized by $\Gamma (D(f)\cap \mathrm{Spec}(A/I), \mathcal{O}^h) = A_f^h$ for $f \in A,$ where $A_f ^h$ is the Henselization of $A_f$ with respect to $I A_f$. In loc. cit., an example is constructed of a Henselian pair $(A, I)$ with $H^1_{Zar}(\mathrm{Spec}(A/I), \mathcal{O}^h) \neq 0$. Since \v{C}ech cohomology and cohomology always agree in degree 1, we have
$$
0 \neq H^1_{Zar} (\mathrm{Spec}(A/I), \mathcal{O}^h)) = \mathrm{colim}_{A \to B} H^1 (\check{\mathcal{C}}(B/A)^h)
$$
where the colimit is taken over standard Zariski coverings $A \to B$. Since filtered colimits are exact, this implies that there exists a standard Zariski covering $A \to B$ for which the complex $\check{\mathcal{C}}(B/A)^h$ is not exact in degree 1. 
\end{example}

In positive characteristic, such an example is not possible essentially by Gabber's affine analog of proper base change.

\begin{lemma}
Assume $A$ is a $\mathbf{Z}/N$-algebra for some integer $N>1$. Let $M$ be an $A$-module. Then the complex of $A$-modules
\begin{equation}
\label{equn-h-amitsur}
M \otimes _A \check{\mathcal{C}}(B/A)^h = M \otimes _A B^h \to M \otimes _A (B \otimes _A B)^h \to M \otimes _A (B \otimes _A B \otimes _A B)^h \to \cdots 
\end{equation}
is quasi-isomorphic to $M$ placed in degree zero. 
\end{lemma}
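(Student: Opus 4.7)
The plan is to identify the complex $M \otimes_A \check{\mathcal{C}}(B/A)^h$ with a \v{C}ech complex on the pro-\'etale site of $\mathrm{Spec}(A/I)$ and then to compute its cohomology by combining Lemma \ref{lemma-gabber} with the vanishing of quasi-coherent cohomology on affine schemes. I assume in addition that $A \to B$ is faithfully flat, as is the case in the intended application to Theorem \ref{thm-henselian-descent}.

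Set $\mathcal{F} = \nu^{-1}\widetilde{M}$, where $\widetilde{M}$ is the quasi-coherent sheaf on $\mathrm{Spec}(A)_{\acute{e}t}$ associated to $M$. Since $A$ is a $\mathbf{Z}/N$-algebra, $\mathcal{F}$ is classical and torsion, so both parts of Lemma \ref{lemma-gabber} are available. Writing $C_n = (B^{\otimes_A (n+1)})^h$, so that $C_n/IC_n = (B/IB)^{\otimes_{A/I} (n+1)}$, Lemma \ref{lemma-gabber} gives the identification
$$
M \otimes_A C_n = \Gamma(\mathrm{Spec}(C_n), \mathcal{F}) = \Gamma(\mathrm{Spec}(C_n/IC_n), i^{-1}\mathcal{F}),
$$
which realizes $M \otimes_A \check{\mathcal{C}}(B/A)^h$ as the \v{C}ech complex of $i^{-1}\mathcal{F}$ with respect to the pro-\'etale cover $\mathrm{Spec}(B/IB) \to \mathrm{Spec}(A/I)$ (a cover by Lemma \ref{lemma-faithfully-flat}).

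To finish I would invoke the \v{C}ech-to-derived functor spectral sequence, which collapses provided $H^q(\mathrm{Spec}(C_n/IC_n), i^{-1}\mathcal{F}) = 0$ for all $n$ and all $q > 0$. By the higher cohomology part of Lemma \ref{lemma-gabber}, this reduces to $H^q(\mathrm{Spec}(C_n), \mathcal{F}) = 0$ for $q > 0$. Here I would observe that the pullback of $\mathcal{F}$ to $\mathrm{Spec}(C_n)_{pro\text{-}\acute{e}t}$ is again classical (by \cite[Lemma 5.4.1]{MR3379634}), corresponding under $\nu^{-1}$ to the quasi-coherent \'etale sheaf $\widetilde{M \otimes_A C_n}$ on $\mathrm{Spec}(C_n)_{\acute{e}t}$; Remark \ref{remark-sections} then reduces its cohomology to \'etale cohomology of a quasi-coherent sheaf on an affine scheme, which vanishes. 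The spectral sequence identifies the cohomology of the complex with $H^p(\mathrm{Spec}(A/I), i^{-1}\mathcal{F})$, and a final application of Lemma \ref{lemma-gabber} with $B = A$ equates this to $H^p(\mathrm{Spec}(A), \mathcal{F})$, which is $M$ in degree zero and zero in higher degrees.

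The main technical point is identifying the pullback of $\mathcal{F}$ along a weakly \'etale morphism with $\nu^{-1}$ of a quasi-coherent \'etale sheaf; this is what powers the cohomology vanishing on the affines $\mathrm{Spec}(C_n)$ and, via Lemma \ref{lemma-gabber}, on $\mathrm{Spec}(C_n/IC_n)$. Modulo this, the argument is a direct formal parallel of Step 3 of the proof of Theorem \ref{thm-henselian-descent}, with Lemma \ref{lemma-gabber} promoted to the higher-cohomology setting and applied degree-by-degree.
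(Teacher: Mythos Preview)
Your argument is correct and is essentially the paper's own proof: your sheaf $\mathcal{F}=\nu^{-1}\widetilde{M}$ is exactly the sheaf the paper defines by $\mathcal{F}(\mathrm{Spec}(C))=M\otimes_A C$, and both proofs then feed Lemma~\ref{lemma-gabber} into the \v{C}ech-to-derived comparison to reduce to quasi-coherent vanishing on affines. The only cosmetic difference is that the paper verifies $H^q(U,\mathcal{F})=0$ for affine pro-\'etale $U$ directly from fpqc descent for modules (\cite[\href{https://stacks.math.columbia.edu/tag/023M}{Tag 023M}, \href{https://stacks.math.columbia.edu/tag/03F9}{Tag 03F9}]{stacks-project}), whereas you pass through Remark~\ref{remark-sections} and \'etale quasi-coherent vanishing; and the paper, like you, tacitly uses faithful flatness so that $\mathrm{Spec}(B/IB)\to\mathrm{Spec}(A/I)$ is a cover.
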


\begin{proof}
Let $\mathcal{F}$ be the sheaf on $\mathrm{Spec}(A)_{pro\text{-}\acute{e}t}$ characterized by the formula $\mathcal{F}(U) = M \otimes _A B$ for $U = \mathrm{Spec}(B)$ an affine object of $\mathrm{Spec}(A)_{pro\text{-}\acute{e}t}$. It follows from \cite[\href{https://stacks.math.columbia.edu/tag/023M}{Tag 023M}]{stacks-project} that $\mathcal{F}$ is indeed a sheaf and that for $U$ an affine object of $\mathrm{Spec}(A)_{pro\text{-}\acute{e}t}$ we have $H^p(U, \mathcal{F}) = 0$ for $p>0$. See \cite[\href{https://stacks.math.columbia.edu/tag/03F9}{Tag 03F9}]{stacks-project} for the vanishing. One sees using the description \cite[Lemma 5.1.2]{MR3379634} of classical sheaves that  $\mathcal{F}$ is classical. Moreover, $\mathcal{F}$ is torsion since $N = 0$ in $A$.  Therefore Gabber's result, Lemma \ref{lemma-gabber}, implies that for $A \to B$ weakly \'etale,
\begin{equation}
\label{equn-vanishing}
H^p(\mathrm{Spec}(B/IB), i^{-1}\mathcal{F}) = 
\begin{cases}
M \otimes _A B^h &\text{if } p = 0\\
0 &\text{otherwise.}
\end{cases}
\end{equation}
Since this vanishing also holds when $B$ is replaced by any tensor product $B \otimes _A \cdots \otimes _A B$, a classical result on \v{C}ech cohomology (\cite[\href{https://stacks.math.columbia.edu/tag/03F7}{Tag 03F7}]{stacks-project}) shows that the \v{C}ech complex
$\check{\mathcal{C}}(\{\mathrm{Spec}(B/IB) \to \mathrm{Spec}(A/I)\}, i^{-1}\mathcal{F})$
is quasi-isomorphic to $R\Gamma (\mathrm{Spec}(A/I), i^{-1}\mathcal{F}),$ which is  $M[0]$ by (\ref{equn-vanishing}) applied to $A = B$. On the other hand, the $p=0$ case of (\ref{equn-vanishing}) applied with $B$ replaced by $B \otimes_A \cdots \otimes _A B$ shows that this \v{C}ech complex coincides with the complex (\ref{equn-h-amitsur}).
\end{proof}

\section{Lifting for weakly \'etale morphisms}

We now turn to the proof of Theorem \ref{thm-weakly-etale-lifts}, starting with some formal consequences of the Henselian lifting property.


\begin{lemma}
\label{lemma-formal}
\begin{enumerate}
    \item An open immersion has the Henselian lifting property.
    \item A composition of morphisms having the Henselian lifting property has the Henselian lifting property.
    \item If $f$ and $g$ are composable morphisms of schemes such that $f$ and $f \circ g$ have the Henselian lifting property, then so has $g$.
    \item The base change of a morphism with the Henselian lifting property has the Henselian lifting property.
\end{enumerate}
\end{lemma}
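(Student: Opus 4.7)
The plan is to handle part (1) using a topological fact about Henselian pairs and then carry out formal diagram chases for parts (2)--(4), which mirror the standard arguments for the formally \'etale lifting property.

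For part (1), let $j : U \to Y$ be an open immersion and consider a diagram as in (\ref{equn-1}) with $(A, I)$ Henselian. I would pull back $U$ along $\mathrm{Spec}(A) \to Y$ to obtain an open subscheme $V \subseteq \mathrm{Spec}(A)$; the map $\mathrm{Spec}(A/I) \to U$ guarantees that $V(I) \subseteq V$. The key point is that any open of $\mathrm{Spec}(A)$ containing $V(I)$ equals $\mathrm{Spec}(A)$: writing the complement as $V(J)$, the condition $V(I) \cap V(J) = \emptyset$ gives $I + J = A$, and since $I$ lies in the Jacobson radical of $A$ this forces $J = A$. Hence $V = \mathrm{Spec}(A)$ and the required lift is the induced map $\mathrm{Spec}(A) \to U$; uniqueness is automatic since $j$ is a monomorphism.

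For (2), given composable $X \xrightarrow{g} Y \xrightarrow{f} Z$ each with the Henselian lifting property, a lifting problem for $f \circ g$ is solved by first applying $f$'s lifting property to produce a unique $\mathrm{Spec}(A) \to Y$ and then applying $g$'s lifting property to obtain a unique $\mathrm{Spec}(A) \to X$. For (4), a lifting problem for the base change $X \times_Y Y' \to Y'$ is post-composed to a lifting problem for $f$, solved there, and then pulled back to $X \times_Y Y'$ via the universal property of the fiber product; uniqueness at each level gives uniqueness of the final lift. Part (3) is slightly subtler: given a lifting problem for $g : X \to Y$, post-composing with $f$ yields a lifting problem for $f \circ g$, which produces a candidate $\ell : \mathrm{Spec}(A) \to X$. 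To see that $g \circ \ell$ recovers the given map $\mathrm{Spec}(A) \to Y$, I would note that both $g \circ \ell$ and the given map are lifts of the same diagram along $f$ and therefore agree by uniqueness for $f$; uniqueness of $\ell$ itself follows directly from uniqueness for $f \circ g$.

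The main obstacle is not substantial: part (1) is the only step using the Henselian hypothesis in a non-formal way, and it reduces cleanly to the Jacobson radical observation. Everything else is the standard lifting-property formalism, identical in shape to the one used to establish the analogous closure properties for formally \'etale morphisms.
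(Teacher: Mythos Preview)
Your proposal is correct and matches the paper's approach. The paper likewise proves (1) via the observation that $V(I)$ maps into the open and $I$ lies in the Jacobson radical (phrased there as ``every point of $\mathrm{Spec}(A)$ is a generalization of a point of $V(I)$''), and simply declares (2)--(4) formal; you have filled in those formalities correctly.
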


\begin{proof}
We prove 1; 2-4 are formal. It suffices to show that if we are given a diagram (\ref{equn-1}) with $f$ an open immersion, then the image of $\mathrm{Spec}(A) \to Y$ is contained in the open $f(X) \subset Y$. This is because the closed $V(I)$ maps into $f(X)$ and every point of $\mathrm{Spec}(A)$ is a generalization of a point of $V(I)$ since $I$ is contained in the Jacobson radical of $A$.
\end{proof}

\begin{proof}[Proof of $\implies$ direction of Theorem \ref{thm-weakly-etale-lifts}]
If $f$ has the Henselian lifting property then so does its diagonal by Lemma \ref{lemma-formal}. Hence it suffices to show $f$ is flat. If $U \subset X$ is an affine open mapping into an affine open $V \subset Y$ then it follows from Lemma \ref{lemma-formal} that $U \to V$ has the Henselian lifting property. Thus it suffices to show that a ring map $R \to S$ with the Henselian lifting property is flat. Choose a surjection $P \to S$ of $R$-algebras with $P$ a possibly infinite polynomial algebra over $R$. Let $P^h$ denote the Henselization of $P$ with respect to $\mathrm{Ker}(P \to S)$. Then form the solid diagram
\begin{equation*}
\begin{tikzcd}
  S   & S \ar[l, "="'] \ar[dl, dashed] \\
    P^h \ar[u] & R \ar[u] \ar[l]
\end{tikzcd}
\end{equation*}
and fill in the dashed arrow. Now $S$ is a direct summand of $P^h$ as an $R$-module, and $P^h$ is flat over $R$, completing the proof.
\end{proof}

Flat immersions of schemes which are not open immersions are somewhat rare, but they do occur: The diagonal of a weakly \'etale morphism is often an example. We point out here that they share the following property in common with the open immersions:

\begin{lemma}
\label{lemma-flat-immersion}
Let $j : Z \to X$ be a flat immersion of schemes. Let $f : T \to X$ be a morphism. Then $f$ factors through $Z$ (necessarily uniquely) if and only if $f(T) \subset j(Z)$ set-theoretically.
\end{lemma}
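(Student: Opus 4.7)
The ``only if'' direction is immediate. For the other direction, assume $f(T) \subset j(Z)$ set-theoretically. The plan is to reduce the problem to a short commutative-algebra statement about flat closed immersions, and then exploit the fact that a flat closed immersion corresponds to a ring epimorphism.

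First, factor $j$ as $Z \xrightarrow{k} U \xrightarrow{i} X$ with $k$ a closed immersion and $i$ the inclusion of an open subscheme $U \subset X$. Since $f(T) \subset j(Z) \subset U$, the morphism $f$ factors uniquely through $U$ as $f' : T \to U$ by the trivial universal property of open immersions, and $k$ inherits flatness from $j$ because $i$ induces isomorphisms on local rings. Hence one may replace $(X,j)$ by $(U,k)$ and assume that $j$ is a flat closed immersion. Working locally, write $X = \mathrm{Spec}(R)$, $Z = \mathrm{Spec}(R/I)$, and $T = \mathrm{Spec}(S)$; then the hypothesis becomes $IS \subset \sqrt{0_S}$, and the goal is to show $IS = 0$.

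The crucial point is that flatness of $R/I$ over $R$ forces $R \to R/I$ to be a ring epimorphism (equivalently, tensoring the sequence $0 \to I \to R \to R/I \to 0$ with $R/I$ shows $I = I^2$, and $R/I \otimes_R R/I = R/I$). Base-changing $R \to R/I$ along $R \to S$ gives $S \to S/IS$, which is therefore again a flat ring epimorphism. Moreover $\mathrm{Spec}(S/IS) = V(IS) = \mathrm{Spec}(S)$ as topological spaces, so $S \to S/IS$ is faithfully flat. The argument concludes with the remark that a faithfully flat ring epimorphism $A \to B$ is an isomorphism: faithfully flat descent yields $A = \mathrm{eq}(B \rightrightarrows B \otimes_A B)$, but the epi condition identifies $B \otimes_A B$ with $B$ and both structure maps with the identity, so the equalizer is $B$ itself. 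Applied to $S \to S/IS$, this gives $IS = 0$ and hence the factorization of $f$ through $Z$.

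The only real obstacle is keeping track of the reduction to the flat closed immersion case; the ``flat implies epi'' step and the ``faithfully flat epi implies iso'' step are both brief and essentially automatic from descent.
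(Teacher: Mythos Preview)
Your proof is correct, but the route differs from the paper's. After the same reduction to a flat closed immersion $\mathrm{Spec}(R/I)\hookrightarrow\mathrm{Spec}(R)$ and a ring map $R\to S$ with $IS$ contained in every prime of $S$, the paper invokes \cite[\href{https://stacks.math.columbia.edu/tag/04PS}{Tag 04PS}]{stacks-project}: a flat surjection $R\to R/I$ is canonically the localization $(1+I)^{-1}R$, so one only has to check that each $1+f$ with $f\in I$ becomes a unit in $S$, which is immediate from the hypothesis on primes. You instead base-change to see that $S\to S/IS$ is a flat ring epimorphism which is surjective on spectra, hence faithfully flat, and then use the descent equalizer to conclude that a faithfully flat epimorphism is an isomorphism. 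Both arguments are short and standard; the paper's is slightly more elementary (it stays at the level of elements and the universal property of localization), while yours is more categorical and avoids quoting the structure result 04PS. One small remark: your sentence ``flatness of $R/I$ over $R$ forces $R\to R/I$ to be a ring epimorphism'' is phrased oddly, since any surjection of rings is already an epimorphism; what you actually use is that it is a \emph{flat} epimorphism, and the parenthetical $I=I^2$ computation, while correct, is not needed for the rest of your argument.
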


\begin{proof}
The only if direction is clear. The if direction is local so we may assume given a flat surjection of rings $A \to A/I$ and a ring map $A \to B$ such that $IB \subset \mathfrak{p}$ for every $\mathfrak{p} \in \mathrm{Spec}(A)$. We have to show $A \to B$ factors through $A/I$. By  \cite[\href{https://stacks.math.columbia.edu/tag/04PS}{Tag 04PS}]{stacks-project}, a flat surjection $A \to A/I$ of rings satisfies $A/I \cong (1 + I)^{-1} A$ as $A$-algebras. By the universal property of localization, it suffices now to show every element $1 + f$ with $f \in I$ is mapped to a unit of $B$. This is because $1 + f$ is not contained in any prime of $B$ by the assumption that they all contain $IB \ni f$. 
\end{proof}

\begin{corollary}
\label{corollary-unique}
Let $f : X \to Y$ be a morphism of schemes.
\begin{enumerate}
    \item If $f$ is a flat immersion then $f$ has the Henselian lifting property.
    \item If the diagonal of $f$ is flat and $(A,I)$ is a henselian pair, then any dashed arrow in a diagram (\ref{equn-1}), if it exists, is unique.
\end{enumerate}
\end{corollary}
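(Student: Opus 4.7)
The plan is to handle the two parts separately, with part (2) reducing formally to part (1). The main input for both is Lemma \ref{lemma-flat-immersion} together with the fact that for a Henselian pair $(A,I)$ the ideal $I$ lies in the Jacobson radical of $A$.

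For part (1), consider a solid diagram \eqref{equn-1} with $f : X \to Y$ a flat immersion and $(A,I)$ a Henselian pair. Write $\beta : \mathrm{Spec}(A) \to Y$ for the bottom arrow and $\alpha : \mathrm{Spec}(A/I) \to X$ for the top arrow. I would first show that $\beta(\mathrm{Spec}(A)) \subset f(X)$ set-theoretically: the image of $V(I)$ sits inside $f(X)$ via $\alpha$; since $I$ is contained in the Jacobson radical of $A$, every point of $\mathrm{Spec}(A)$ is a generalization of a point of $V(I)$; and since $f$ is flat, $f(X)$ is stable under generalization by going-down. Lemma \ref{lemma-flat-immersion} then produces a unique map $\gamma : \mathrm{Spec}(A) \to X$ with $f \circ \gamma = \beta$. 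Because $f$ is a monomorphism, the identity $f \circ (\gamma|_{V(I)}) = \beta|_{V(I)} = f \circ \alpha$ forces $\gamma|_{V(I)} = \alpha$, so $\gamma$ fills in the dashed arrow. Uniqueness of $\gamma$ as a lift against $f$ (again from $f$ being a monomorphism) is what the Henselian lifting property requires.

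For part (2), suppose $g_1, g_2 : \mathrm{Spec}(A) \to X$ are two dashed arrows making \eqref{equn-1} commute. The hypothesis $f \circ g_1 = f \circ g_2$ gives a morphism $(g_1, g_2) : \mathrm{Spec}(A) \to X \times_Y X$. On $\mathrm{Spec}(A/I)$ both $g_i$ restrict to the given map $\mathrm{Spec}(A/I) \to X$, so $(g_1, g_2)|_{\mathrm{Spec}(A/I)}$ factors through the diagonal $\Delta_f : X \to X \times_Y X$. The diagonal is always an immersion and is flat by hypothesis, so by part (1) applied to $\Delta_f$ the map $(g_1, g_2)$ itself factors through $\Delta_f$; composing with either projection gives $g_1 = g_2$.

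I do not anticipate a real obstacle: the argument in part (1) is essentially bundled into Lemma \ref{lemma-flat-immersion} once one checks the set-theoretic containment via the Jacobson radical and going-down, and part (2) is a mechanical application of part (1) to the diagonal. The one place to be careful is verifying that $(g_1, g_2)|_{\mathrm{Spec}(A/I)}$ lands in $\Delta_f(X)$, which is just the assumption that both dashed arrows extend the same top arrow of \eqref{equn-1}.
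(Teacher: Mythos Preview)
Your proof is correct and follows essentially the same approach as the paper: part (1) reduces to Lemma \ref{lemma-flat-immersion} via the generalization argument (flat image is stable under generalization, $I$ lies in the Jacobson radical), and part (2) is the standard formal reduction to part (1) applied to the diagonal. The paper is terser---it cites the proof of Lemma \ref{lemma-formal} for the generalization step and just says ``by a formal argument'' for part (2)---but the content is identical; you have simply unpacked the details (including the monomorphism check that $\gamma|_{V(I)} = \alpha$) that the paper leaves implicit.
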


\begin{proof}
1. Suppose given a solid diagram (\ref{equn-1}) with $(A,I)$ a Henselian pair. We have to show there exists a unique dashed arrow. By Lemma \ref{lemma-flat-immersion} it suffices to show $\mathrm{Spec}(A)$ maps into the subset $f(X) \subset Y$. But $f$ is flat so $f(X)$ is closed under generalization and contains the image of $V(I) \subset \mathrm{Spec}(A)$, so we conclude as in the proof of Lemma \ref{lemma-formal}.

2. If the diagonal of $f$ is flat then it is a flat immersion so by 1 it has the Henselian lifting property. By a formal argument, this implies uniqueness of dashed arrows for all diagrams of the form (\ref{equn-1}).
\end{proof}

The idea for proving the more difficult direction of Theorem \ref{thm-weakly-etale-lifts} is to use the uniqueness just proven together with Henselian descent to reduce to constructing a dashed arrow locally. 

\begin{proof}[Proof of $\impliedby$ direction of Theorem \ref{thm-weakly-etale-lifts}]
It remains to prove that if $X \to Y$ is weakly \'etale, then it has the Henselian lifting property. To prove this we have:

\underline{Claim.} It is enough to show that if $(A,I)$ is a Henselian pair and $f: X \to \mathrm{Spec}(A)$ is a weakly \'etale, faithfully flat morphism with $X$ quasi-compact and quasi-separated, then for any solid diagram
\begin{equation}
\label{equn-2}
\begin{tikzcd}
    \mathrm{Spec} (A / I) \ar[r, "t"]\ar[d] & X \ar[d, "f"]  \\
    \mathrm{Spec} (A) \ar[r, "="]\ar[ur, dashed, "\exists "] & \mathrm{Spec}(A)
    \end{tikzcd}
\end{equation}
there exists a dashed arrow.

\emph{Proof of Claim.} Assume given a diagram (\ref{equn-1}) with $X \to Y$ weakly \'etale. We only have to show a dashed arrow exists since uniqueness follows from Corollary \ref{corollary-unique}.2. Producing a dashed arrow in (\ref{equn-1}) is equivalent to producing a dashed arrow in the diagram (\ref{equn-2}) with $X$ replaced by $X \times _Y \mathrm{Spec}(A)$. Thus we see it suffices to find a dashed arrow in the diagram (\ref{equn-2}) for $f: X \to \mathrm{Spec}(A)$ weakly \'etale. To do this, we may assume $X$ is quasi-compact by replacing $X$ by a quasi-compact open containing the image of $\mathrm{Spec}(A/I)$. Then write $X = \mathrm{colim}_i X_i$ as in Step 1 of the proof of Theorem \ref{thm-henselian-descent} and replace $X$ by any $X_i$ through which $\mathrm{Spec}(A/I) \to X$ factors to assume $X$ is quasi-compact and quasi-separated, and still weakly \'etale over $A$ since $X_i \to X$ is a local isomorphism. It is automatic from Lemma \ref{lemma-faithfully-flat} that $f$ is faithfully flat. This proves the claim.

Now suppose given a diagram (\ref{equn-2}) as in the claim. Choose a surjective \'etale morphism $g: \mathrm{Spec}(B) \to X$. Then $A \to B$ is faithfully flat and weakly \'etale. It may not be true that $g$ and $t : \mathrm{Spec}(A/I) \to X$ give rise to the same morphism $\mathrm{Spec}(B/IB) \to X$, and we will now remedy this. The cartesian diagram
$$
\begin{tikzcd}[column sep=tiny]
\mathrm{Spec}(B) \times _X \mathrm{Spec}(A/I) \ar[rr] \ar[d] & & \mathrm{Spec}(B) \times _A \mathrm{Spec}(A/I) \ar[d] \ar[r, equal] & \mathrm{Spec}(B/IB)  \\
X \ar[rr, "\Delta_f"] & & X \times _A X
\end{tikzcd}
$$
shows that the top horizontal arrow is weakly \'etale and quasi-compact since $\Delta_f$ is by assumption. Choose surjective \'etale morphism $\mathrm{Spec}(D) \to \mathrm{Spec}(B) \times _X \mathrm{Spec}(A/I)$. Then $B/IB \to D$ is weakly \'etale and there is a commutative diagram
$$
\begin{tikzcd}
\mathrm{Spec}(D) \ar[r] \ar[d] &\mathrm{Spec}(B) \ar[d, "g"] \\
\mathrm{Spec}(A/I) \ar[r, "t"] & X
\end{tikzcd}.
$$
In fact, by \cite[Theorem 2.3.4]{MR3379634}, after replacing $D$ with a faithfully flat, ind-\'etale $D$-algebra, we may even assume $B/IB \to D$ is ind-\'etale. In this case, by \cite[Lemma 2.2.12]{MR3379634}, there is an ind-\'etale $B$-algebra $B'$ with an isomorphism $B'/IB' = D$. Replace $B$ with $B'$. Then $g: \mathrm{Spec}(B) \to X$ and $t : \mathrm{Spec}(A/I) \to X$ indeed give rise to the same morphism $\mathrm{Spec}(B/IB) \to X$ and furthermore, $A \to B$ is weakly \'etale by construction and faithfully flat by Lemma \ref{lemma-faithfully-flat}.

We are looking for an element of $\mathrm{Mor}_A(\mathrm{Spec}(A), X)$ mapping to the given element $t \in \mathrm{Mor}_A(\mathrm{Spec}(A/I), X)$. Set $C = B \otimes _A B$ and let $B^h, C^h$ denote the henselizations of $B, C$ with respect to $IB, IC$. Consider the diagram
$$
\begin{tikzcd}
\mathrm{Mor}_A(\mathrm{Spec}(A), X) \ar[r] \ar[d] &\mathrm{Mor}_A(\mathrm{Spec}(B^h), X) \ar[r, shift left] \ar[r, shift right] \ar[d] &\mathrm{Mor}_A(\mathrm{Spec}(C^h), X) \ar[d]
 \\
\mathrm{Mor}_A(\mathrm{Spec}(A/I), X) \ar[r] &\mathrm{Mor}_A(\mathrm{Spec}(B/IB), X) \ar[r, shift left] \ar[r, shift right] &\mathrm{Mor}_A(\mathrm{Spec}(C/IC), X).
\end{tikzcd}
$$
It follows from faithfully flat descent that the bottom row is an equalizer and from Henselian descent, Theorem \ref{thm-henselian-descent}, that the top row is also. It follows from Corollary \ref{corollary-unique}.2 that the vertical arrows are injective. The morphism $g : \mathrm{Spec}(B) \to X$ induces an element $g' \in \mathrm{Mor}_A(\mathrm{Spec}(B^h), X)$ which maps to the same element of $\mathrm{Mor}_A(\mathrm{Spec}(B/IB), X)$ as $t \in  \mathrm{Mor}_A(\mathrm{Spec}(A/I), X)$. It is now a diagram chase to produce the desired element of $\mathrm{Mor}_A( \mathrm{Spec}(A), X)$.
\end{proof}



\section{Weakly \'etale over regular}
\label{section-last}

\begin{prop}
\label{prop-pro-finite-etale}
Let $Y$ be a Noetherian integral scheme which is regular and Nagata. Let $f: X \to Y$ be an integral, weakly \'etale morphism of schemes. Then $f$ is a co-filtered limit of finite-\'etale morphisms. 
\end{prop}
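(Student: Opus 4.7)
The plan is to reduce to the affine case $Y = \mathrm{Spec}(A)$, $X = \mathrm{Spec}(B)$, where $A$ is a Noetherian regular Nagata domain and $A \to B$ is integral and weakly \'etale, and to write $B$ as a filtered colimit of finite \'etale $A$-subalgebras. The global statement will follow by running the same construction inside $f_* \mathcal{O}_X \otimes_{\mathcal{O}_Y} K$, with $K$ the function field of $Y$, and taking the normalizations of $Y$ in finite \'etale $K$-subalgebras. The two non-formal external inputs are Olivier's theorem (a weakly \'etale algebra over a normal ring is normal) and Zariski-Nagata purity of the branch locus.

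Set $K = \mathrm{Frac}(A)$. By Olivier, $B$ is a normal ring. Since $A \to B$ is flat, $B$ embeds into $B \otimes_A K$, and the latter sits inside the total ring of fractions of $B$. Being weakly \'etale over the field $K$, the ring $B \otimes_A K$ is ind-\'etale: write $B \otimes_A K = \mathrm{colim}_E E$ as $E$ ranges over its finite \'etale $K$-subalgebras. For each such $E$, let $C_E$ be the integral closure of $A$ in $E$; this is finite over $A$ by the Nagata hypothesis. Since $B$ is normal, every element of $B \otimes_A K$ integral over $A$ already lies in $B$, so $C_E \subset B$. Since $A \to B$ is integral, each element of $B$ is integral over $A$ and lies in some $E$, hence in the corresponding $C_E$; thus $B = \mathrm{colim}_E C_E$.

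It remains to prove each $A \to C_E$ is \'etale. I plan to check this first at every height-one prime $\mathfrak{p} \subset A$, and then invoke Zariski-Nagata purity of the branch locus for the finite morphism $\mathrm{Spec}(C_E) \to \mathrm{Spec}(A)$ from a product of normal domains to the regular scheme $\mathrm{Spec}(A)$. At a height-one prime, $A_\mathfrak{p}$ is a DVR with uniformizer $\pi$, and $R := C_E \otimes_A A_\mathfrak{p}$ is a finite torsion-free $A_\mathfrak{p}$-module, hence free. The key observation is that $R/\pi R \hookrightarrow B_\mathfrak{p}/\pi B_\mathfrak{p}$: if $r \in R$ can be written as $r = \pi s$ for some $s \in B_\mathfrak{p}$, then $s \in E \cap B_\mathfrak{p}$, and since $B$ is integral over $A$ the element $s$ is integral over $A_\mathfrak{p}$; therefore $s$ lies in the integral closure $R$ of $A_\mathfrak{p}$ in $E$, so $r \in \pi R$. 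Now $B \otimes_A \kappa(\mathfrak{p})$ is weakly \'etale over a field and hence ind-\'etale, and any finite subalgebra of an ind-\'etale algebra over a field is itself \'etale; therefore $R/\pi R$ is \'etale over $\kappa(\mathfrak{p})$. Combined with freeness of $R$, this yields $A_\mathfrak{p} \to R$ \'etale.

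The main anticipated obstacle is precisely the jump from codimension-one \'etaleness of $C_E$ to global \'etaleness over $A$. This is handled by Zariski-Nagata purity of the branch locus for a finite morphism from a normal scheme to a regular one, notably without any a priori flatness hypothesis on $C_E$. Olivier's normality theorem is equally indispensable, as it is what forces the candidate finite subalgebras $C_E$ to sit inside $B$ in the first place.
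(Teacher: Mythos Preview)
Your argument is correct and follows the same overall strategy as the paper: pass to the generic fibre, write it as a filtered colimit of finite \'etale $K$-subalgebras $E$, take integral closures $C_E$ (finite by Nagata, contained in $B$ by normality of $B$), and then verify that each $A \to C_E$ is \'etale via purity of the branch locus after a codimension-one check.

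The one substantive difference is in the codimension-one verification. The paper picks, for a codimension-one point $x'$ of $\mathrm{Spec}(C_E)$ lying over $y \in Y$, a point $x \in X$ above $x'$, and invokes Olivier's theorem (isomorphism of strict Henselizations) for the weakly \'etale map $\mathcal{O}_{Y,y} \to \mathcal{O}_{X,x}$ to see that a uniformizer $\pi$ lands in $\mathfrak{m}_x \setminus \mathfrak{m}_x^2$, hence also in $\mathfrak{m}_{x'} \setminus \mathfrak{m}_{x'}^2$. You instead show directly that $R/\pi R$ injects into the fibre $B \otimes_A \kappa(\mathfrak{p})$, which is weakly \'etale over a field and hence ind-\'etale, so that the finite subalgebra $R/\pi R$ is \'etale over $\kappa(\mathfrak{p})$; together with freeness of $R$ over the DVR $A_\mathfrak{p}$ this gives \'etaleness. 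Your route is a bit more elementary in that it avoids strict Henselizations entirely, trading them for a short injectivity computation; the paper's route is more conceptual and is what gets refined into Lemma~\ref{lemma-unramified} later.

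One terminological quibble: the normality of $B$ is not ``Olivier's theorem'' (that name is reserved for the strict-Henselization statement the paper uses). What you need is that a weakly \'etale algebra over a normal ring is integrally closed in its total ring of fractions, which is the content of the reference \cite[\href{https://stacks.math.columbia.edu/tag/092W}{Tag 092W}]{stacks-project} the paper cites for the same purpose.
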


\begin{proof}
Let $K$ denote the function field of $Y$. Then the generic fibre $X_K = \mathrm{Spec}(A)$ where $A$ is a weakly \'etale $K$-algebra. Thus $A$ is the filtered colimit of its finite-type $K$-subalgebras $A'$ which are all \'etale algebras over $K$ by \cite[\href{https://stacks.math.columbia.edu/tag/0CKR}{Tag 0CKR}]{stacks-project}. On the other hand, $f$ is the integral closure of $Y$ in the $K$-algebra $A$ since $f$ is integral and $X$ is integrally closed in $A$ by \cite[\href{https://stacks.math.columbia.edu/tag/092W}{Tag 092W}]{stacks-project}. Hence $f$ is the co-filtered limit with affine transition maps of the integral closures $X' \to Y$ of $Y$ in $A'$. It suffices to show each $X' \to Y$ is finite \'etale. Finiteness follows from the assumption that $Y$ is Nagata, see \cite[\href{https://stacks.math.columbia.edu/tag/03GH}{Tag 03GH}]{stacks-project}.  For \'etaleness, since $X'$ is normal we may use purity of the branch locus \cite[\href{https://stacks.math.columbia.edu/tag/0BMB}{Tag 0BMB}]{stacks-project}. It suffices to show that for each codimension one point $x' \in X'$ with image $y \in Y$ the extension $\mathcal{O}_{Y,y} \to \mathcal{O}_{X',x'}$ of DVRs is unramified with separable residue field extension. Now $X \to X'$ is surjective: Its image is dense since $A' \subset A$ and closed since $f$ was assumed integral. Thus we may find $x \in X$ mapping to $x'$ and the composition $\mathcal{O}_{Y,y} \to \mathcal{O}_{X',x'} \to \mathcal{O}_{X,x}$ is a weakly \'etale ring map by \cite[\href{https://stacks.math.columbia.edu/tag/094Q}{Tag 094Q}]{stacks-project}. Thus $\mathcal{O}_{Y,y} \to \mathcal{O}_{X,x}$ induces an isomorphism on strict Henselizations by Olivier's Theorem \cite[\href{https://stacks.math.columbia.edu/tag/092Z}{Tag 092Z}]{stacks-project}, so if
$\pi \in \mathcal{O}_{Y,y}$ is a uniformizer, then the image of $\pi$ in $\mathcal{O}_{X,x}$ must be in $\mathfrak{m}_x \setminus \mathfrak{m}_x ^2$. But then the same must be true for the image of $\pi$ in $\mathcal{O}_{X',x'}$ so that $\mathcal{O}_{Y,y} \to \mathcal{O}_{X',x'}$ is unramified. For the residue field extension, we have a tower $\kappa (x) / \kappa (x') / \kappa (y)$ and $\kappa (x)/\kappa(y)$ is separable algebraic by \cite[\href{https://stacks.math.columbia.edu/tag/092R}{Tag 092R}]{stacks-project} hence so is $\kappa (x')/\kappa (y).$
\end{proof}

By slightly modifying this argument we will prove Theorem \ref{thm-weakly-etale-over-regular}. In place of classical Zariski--Nagata purity \cite[\href{https://stacks.math.columbia.edu/tag/0BMB}{Tag 0BMB}]{stacks-project} we will use the following refinement.

\begin{thm}[{\cite[\href{https://stacks.math.columbia.edu/tag/0ECD}{Tag 0ECD}]{stacks-project}}]
\label{thm-afineness}
Let $Y$ be an excellent regular scheme over a field. Let $f: X \to Y$ be a finite type morphism of schemes with $X$ normal. Let $V \subset X$ be the maximal open subscheme where $f$ is \'etale. Then the inclusion morphism $V \to X$ is affine.
\end{thm}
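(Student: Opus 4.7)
The plan is to show that the reduced complement $Z = (X \setminus V)_{\mathrm{red}}$ is an effective Cartier divisor on $X$. Once this is established, $V = X \setminus Z \to X$ is affine by the standard local computation: the complement of an effective Cartier divisor in an affine scheme $\mathrm{Spec}(B)$ is locally of the form $\mathrm{Spec}(B_g)$, where $g$ is a local defining equation.

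First I would make the standard reductions: affineness of $V \to X$ is Zariski local on the target, so I may assume $X$ and $Y$ are both affine; since $X$ is normal, its irreducible components are disjoint clopen subsets and each is integral, so I may further assume $X$ integral. Second, I would show $Z$ has pure codimension one in $X$. Using Zariski's Main Theorem I factor $f$ as $X \hookrightarrow \bar X \to Y$ with $\bar X$ finite over $Y$, and I replace $\bar X$ by its normalization in the function field of $X$. Classical Zariski--Nagata purity (Tag 0BMB), applied to the finite morphism $\bar X \to Y$ from a normal scheme to a regular one, yields that the complement of the \'etale locus is pure of codimension one in $\bar X$; intersecting with the open $X$, the same holds for $Z \subset X$ (or $Z$ is empty, in which case there is nothing to prove).

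Third, I would upgrade $Z$ to a Cartier divisor. At every codimension-one point $x \in Z$, the local ring $\mathcal{O}_{X,x}$ is a DVR by normality of $X$, so the ideal sheaf of $Z$ is automatically principal there. The delicate point is to extend local principality to the higher-codimension points of $Z$. This is where the refined hypothesis that $Y$ is excellent regular over a field enters in an essential way: via Popescu approximation, $Y$ becomes a cofiltered limit of smooth schemes over its base field, and a finite type morphism from a normal scheme spreads out; alternatively one can apply an alteration (de Jong) that resolves $X$ and descend local principality along the resulting generically-finite proper surjection. Either way, one reduces the question to the case where $X$ is regular, where every Weil divisor is Cartier, giving the result.

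The main obstacle is precisely this last step: without the hypothesis ``excellent regular over a field'', one can easily produce normal schemes with pure codimension-one subsets that fail to be Cartier (e.g.\ the vertex of a quadric cone), so the hypothesis on $Y$ must be used in a genuinely nontrivial way — via resolution- or alteration-type inputs, or absolute cohomological purity in the equi-characteristic setting — to force the particular Weil divisor $Z$ cut out here to be locally principal on all of $X$.
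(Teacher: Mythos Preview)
The paper does not prove this theorem at all: it is quoted verbatim from the Stacks Project (Tag 0ECD) and used as a black box in the proof of Theorem~\ref{thm-weakly-etale-over-regular}. So there is no ``paper's own proof'' to compare against; I can only assess your sketch on its merits.

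Your outline has two genuine gaps.

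\emph{Step 2 (purity).} You invoke Zariski's Main Theorem to factor $f$ as an open immersion followed by a finite map, but ZMT in that form requires $f$ to be quasi-finite, and here $f$ is only of finite type. If $V\neq\emptyset$ and $X$ is integral then $f$ is generically \'etale, hence generically quasi-finite, but $f$ may well have positive-dimensional fibres over a closed subset. You would first need to excise the non-quasi-finite locus and argue separately that it contributes only codimension~$\ge 1$ components to $Z$; this is fixable but is not what you wrote.

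\emph{Step 3 (Cartier).} This is the real problem. You correctly identify that upgrading the Weil divisor $Z$ to an effective Cartier divisor is the heart of the matter, but then you do not actually do it. Neither of your two suggestions works as stated. For Popescu: writing $Y=\varprojlim Y_i$ with $Y_i$ smooth over $k$ and spreading out $X$ to some $X_i\to Y_i$ gives no control on the singularities of $X_i$; normality is not preserved under spreading out, and normalising $X_i$ can change the ramification locus, so you cannot simply ``descend'' principality from a regular model you never constructed. For alterations: a proper, generically finite surjection $\pi:X'\to X$ with $X'$ regular does not let you descend local principality of a Weil divisor --- the pullback $\pi^{-1}(Z)$ being Cartier on $X'$ says nothing about $Z$ on $X$ (think of the vertex of a cone, exactly your own cautionary example). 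So the sentence ``either way, one reduces to $X$ regular'' is where the argument collapses.

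The actual proof in Tag 0ECD does aim at the same target --- producing an effective Cartier divisor whose support is exactly $Z$ --- but gets there by a completely different mechanism: it uses Fitting ideals of $\Omega_{X/Y}$ (or, in the finite case, the different) together with equicharacteristic local algebra to show the relevant ideal is locally principal. The hypothesis ``excellent regular over a field'' enters through those commutative-algebra inputs, not through resolution or approximation. Your sketch never engages with that mechanism, and without it the crucial step is simply missing.
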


Next, we will improve the ad-hoc argument from the proof of Proposition \ref{prop-pro-finite-etale} that $\mathcal{O}_{Y,y} \to \mathcal{O}_{X',x'}$ is unramified with separable residue field extension.

\begin{lemma}
\label{lemma-unramified}
Let $A \to B \to C$ be local homomorphisms of local rings. Assume $A \to B$ is essentially of finite presentation, $B$ is a geometrically unibranch domain (for example if $B$ is normal), $A \to C$ is weakly \'etale, and the generic point of $\mathrm{Spec}(B)$ lies in the image of $\mathrm{Spec}(C) \to \mathrm{Spec}(B)$. Then $A \to B$ is the localization of an \'etale ring map. 
\end{lemma}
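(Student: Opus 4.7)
The plan is to compare strict Henselizations. Choose compatible strict Henselizations fitting into a commutative diagram of local ring maps $A^{sh} \to B^{sh} \to C^{sh}$ of strictly Henselian local rings. The composite $A^{sh} \to C^{sh}$ is weakly \'etale: it factors as $A^{sh} \to (A^{sh} \otimes_A C)_{\mathfrak{m}} \to C^{sh}$, where the first map is weakly \'etale (base change of $A \to C$ followed by a localization) and the second is ind-\'etale (a strict Henselization). By Olivier's theorem \cite[\href{https://stacks.math.columbia.edu/tag/092Z}{Tag 092Z}]{stacks-project}, $A^{sh} \to C^{sh}$ must be an isomorphism. Uniqueness in the universal property of $A^{sh}$ then forces the composition $A^{sh} \to B^{sh} \to C^{sh} = A^{sh}$ to be the identity, so $\rho : B^{sh} \twoheadrightarrow A^{sh}$ is a surjection with section $A^{sh} \hookrightarrow B^{sh}$.

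The heart of the proof is showing $\mathfrak{p} := \mathrm{Ker}(\rho) = 0$. Since $B$ is geometrically unibranch, $B^{sh}$ is a domain, so $\mathfrak{p}$ is a prime ideal. The hypothesis that the generic point of $\mathrm{Spec}(B)$ lies in the image of $\mathrm{Spec}(C) \to \mathrm{Spec}(B)$ forces $B \to C$ to be injective, and the faithfully flat map $C \to C^{sh} = A^{sh}$ is also injective, so the composite $B \to A^{sh}$ is injective and thus $\mathfrak{p} \cap B = 0$. Hence $\mathfrak{p}$ corresponds to a prime of $K \otimes_B B^{sh}$, where $K = \mathrm{Frac}(B)$. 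This ring is at once a domain (a localization of the domain $B^{sh}$) and ind-\'etale over the field $K$. A domain which is ind-\'etale over a field is automatically a field: writing it as a filtered colimit of \'etale $K$-algebras, i.e.\ of finite products of finite separable extensions of $K$, the domain condition forces all but one idempotent to vanish at each stage, leaving a filtered colimit of finite separable field extensions of $K$. Hence $K \otimes_B B^{sh}$ has a unique prime $(0)$, so $\mathfrak{p} = 0$ and $A^{sh} \to B^{sh}$ is an isomorphism.

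From $A^{sh} \cong B^{sh}$ I conclude that $A \to B$ is flat and $\Omega_{B/A} = 0$, and hence essentially \'etale. Flatness follows by faithfully flat descent along $B \to B^{sh}$: the composition $A \to B^{sh} = A^{sh}$ is ind-\'etale, hence flat. For unramifiedness: since $B \to B^{sh}$ is ind-\'etale, the identification $\Omega_{B^{sh}/A} = \Omega_{B/A} \otimes_B B^{sh}$ holds, obtained by taking the filtered colimit over \'etale $B$-algebras of the split short exact sequences of differentials. On the other hand $B^{sh} = A^{sh}$ is ind-\'etale over $A$, so $\Omega_{B^{sh}/A} = 0$; faithful flatness of $B \to B^{sh}$ then yields $\Omega_{B/A} = 0$. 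Finally, an essentially of finite presentation local ring map which is flat with trivial relative differentials is essentially \'etale: writing $B = (B_0)_{\mathfrak{q}}$ with $B_0$ of finite presentation over $A$, the \'etale locus of $A \to B_0$ is open in $\mathrm{Spec}(B_0)$ and contains $\mathfrak{q}$, so $B$ is a localization at a prime of an \'etale $A$-algebra.

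The main obstacle is the vanishing of $\mathfrak{p}$ in the middle paragraph. The geometric unibranch hypothesis is indispensable there: it is what ensures $B^{sh}$ is a domain, permitting the passage to the generic fibre $K \otimes_B B^{sh}$, whose structure as an ind-\'etale domain over a field then forces it to be a field.
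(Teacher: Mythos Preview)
Your proof is correct. Both your argument and the paper's begin by invoking Olivier's theorem to identify $C^{sh}$ with $A^{sh}$, but they diverge afterwards. The paper uses that $B$ is essentially of finite presentation to factor $B \to A^{sh} = \mathrm{colim}_i C_i$ through some $C_i$ which is a localization of an \'etale $A$-algebra, thereby reducing to the already-known case \cite[\href{https://stacks.math.columbia.edu/tag/0GSA}{Tag 0GSA}]{stacks-project} in which $A \to C$ is essentially \'etale rather than merely weakly \'etale. You instead argue directly that $A^{sh} \to B^{sh}$ is an isomorphism, using the geometrically unibranch hypothesis (so that $B^{sh}$ is a domain and $\mathfrak{p}$ is prime) and the generic-point hypothesis (to force $\mathfrak{p} \cap B = 0$, whence $\mathfrak{p}$ lives in the field $K \otimes_B B^{sh}$), and then descend flatness and the vanishing of $\Omega_{B/A}$ along the faithfully flat map $B \to B^{sh}$. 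Your route is more self-contained---it essentially reproves the relevant content of Tag 0GSA in situ and makes transparent exactly where each hypothesis is used---while the paper's is shorter modulo the citation; conceptually both hinge on the same comparison of strict Henselizations.
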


\begin{proof}
This result is known if instead of assuming $A \to C$ is weakly \'etale we assume it is the localization of an \'etale ring map, see \cite[\href{https://stacks.math.columbia.edu/tag/0GSA}{Tag 0GSA}]{stacks-project}. We will reduce to this case. First note that the hypotheses do not change if we replace $C$ with its strict Henselization $C^{sh}$. This is because $C \to C^{sh}$ is ind-\'etale so that the composition $A \to C^{sh}$ remains weakly \'etale, and $C \to C^{sh}$ is faithfully flat so that the generic point of $\mathrm{Spec}(B)$ is still hit. Combining this with Olivier's Theorem \cite[\href{https://stacks.math.columbia.edu/tag/092Z}{Tag 092Z}]{stacks-project} we see that we may assume $C = A^{sh}$ is the strict henselization of $A$. Then as $A$-algebras, $C = \mathrm{colim}_i C_i$ is a filtered colimit of local $A$-algebras which are localizations of \'etale $A$-algebras, and $B$ is essentially of finite presentation, hence the map $B \to C$ factors through some $C_i$. Replace $C$ with $C_i$ and we are done.
\end{proof}


\begin{proof}[Proof of Theorem \ref{thm-weakly-etale-over-regular}]
As $B$ is the filtered colimit of its finite type $A$-subalgebras $C$, it suffices to show each inclusion $C \to B$ factors through an \'etale $A$-algebra. Let $K$ denote the fraction field of $A$. We have a commutative diagram
$$
\begin{tikzcd}
B \ar[r, hook] &B \otimes _A K  \\
C \ar[r] \ar[u, hook] & C \otimes _A K . \ar[u, hook]
\end{tikzcd}
$$
The top horizontal arrow is injective since $B$ is flat over $A$ and the right vertical arrow is injective since $K$ is flat over $A$. We conclude from the diagram that $C \to C \otimes _A K$ is also injective. Let $C'$ denote the integral closure of $C$ inside $C \otimes _A K$. Then $C' \subset B$ since $B$ is integrally closed in $B \otimes _A K$ by \cite[\href{https://stacks.math.columbia.edu/tag/092W}{Tag 092W}]{stacks-project}. Also $C \to C'$ is finite since $A$ is excellent and $C \otimes _A K$ is reduced, see \cite[\href{https://stacks.math.columbia.edu/tag/0CKR}{Tag 0CKR}]{stacks-project} and \cite[\href{https://stacks.math.columbia.edu/tag/03GH}{Tag 03GH}]{stacks-project}. Replace $C$ by $C'$ to assume $C$ is a normal ring, still of finite type over $A$. Let $V \subset \mathrm{Spec}(C)$ denote the \'etale locus of the ring map $A \to C$. Then $V = \mathrm{Spec}(D)$ is affine by Theorem \ref{thm-afineness} and we will be done if we show the inclusion $C \to B$ factors through $D$. This is equivalent to showing that the image of $\mathrm{Spec}(B) \to \mathrm{Spec}(C)$ is contained in $V$. If $\mathfrak{q} \in \mathrm{Spec}(C)$ is in the image then there is $\mathfrak{p} \in \mathrm{Spec}(B)$ mapping to $\mathfrak{q}$ and then applying Lemma \ref{lemma-unramified} to the local ring homomorphisms $A_{\mathfrak{q} \cap A} \to C_{\mathfrak{q}} \to B_{\mathfrak{p}}$ shows that $A \to C$ is \'etale at $\mathfrak{q}$ as needed.
\end{proof}

\begin{corollary}
\label{corollary-no-lift}
There exists a surjective ring homomorphism $A \to A/I$ and a weakly \'etale $A/I$-algebra which does not lift to a weakly \'etale $A$-algebra.
\end{corollary}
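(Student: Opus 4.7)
The approach is to combine Theorem \ref{thm-weakly-etale-over-regular} with the ind-\'etale lifting theorem of Bhatt--Scholze cited in the introduction. Let $A$ be an excellent regular domain containing a field, let $I \subset A$ be any ideal, and suppose that a weakly \'etale $A/I$-algebra $\bar{B}$ lifts to a weakly \'etale $A$-algebra $B$. Then Theorem \ref{thm-weakly-etale-over-regular} forces $B$ to be ind-\'etale over $A$; writing $B = \mathrm{colim}_i B_i$ with each $B_i$ \'etale over $A$, reduction modulo $I$ gives $\bar{B} = \mathrm{colim}_i (B_i / IB_i)$, a filtered colimit of \'etale $A/I$-algebras, so $\bar{B}$ is ind-\'etale over $A/I$.

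Consequently, it suffices to produce an excellent regular domain $A$ containing a field, an ideal $I$, and a weakly \'etale $A/I$-algebra $\bar{B}$ which is \emph{not} ind-\'etale over $A/I$: any such $\bar{B}$ cannot lift, since any weakly \'etale lift would be ind-\'etale by the paragraph above. Note that $A/I$ must then necessarily be non-regular, as otherwise Theorem \ref{thm-weakly-etale-over-regular} applied to $A/I$ itself would force every weakly \'etale $A/I$-algebra to be ind-\'etale.

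For the construction, a natural first candidate is $A = k[x,y]$ for a field $k$ with $I = (xy)$, so that $A/I$ is the coordinate ring of the affine node. One would then try to build a weakly \'etale $A/I$-algebra $\bar{B}$ by a pro-\'etale gluing at the singular point of mismatched \'etale covers of the two branches, arranged so that $\bar{B}$ is flat with flat diagonal but cannot be written as a filtered colimit of \'etale $A/I$-algebras; alternatively one can look for such examples in the literature on Noetherian rings whose spectra are not w-contractible.

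The main obstacle is this last step: exhibiting a concrete weakly \'etale, non-ind-\'etale algebra over a Noetherian ring such as $k[x,y]/(xy)$. The distinction between the two notions over non-regular Noetherian bases is subtle, and this is where the genuine work lies; the formal reduction via Theorem \ref{thm-weakly-etale-over-regular} and the contrapositive are immediate once such an example is in hand.
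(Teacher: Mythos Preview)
Your reduction is exactly the paper's: if $A$ is an excellent regular domain containing a field and a weakly \'etale $A/I$-algebra $\bar{B}$ lifts to a weakly \'etale $A$-algebra, then by Theorem \ref{thm-weakly-etale-over-regular} the lift is ind-\'etale and hence so is $\bar{B}$; thus one needs only a weakly \'etale but non-ind-\'etale algebra over some $A/I$. You also correctly identify that the base $A/I$ must be non-regular.

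The gap, which you explicitly acknowledge, is that you have not supplied the example, and the proof is not complete without it. The paper does not construct one from scratch either: it takes $A = \mathbf{C}[x,y]$ and $I = \big((y-x^2+1)(y+x^2-1)\big)$ and invokes \cite[\href{https://stacks.math.columbia.edu/tag/09AP}{Tag 09AP}]{stacks-project}, where a weakly \'etale $A/I$-algebra that is not ind-\'etale is written down. The curve $V(I)$ consists of two smooth branches meeting in two nodes, and the construction there is precisely of the flavor you sketch: one glues strict henselizations of the branches along the singular points in an incompatible way (swapping the two nodes on one branch), producing a flat algebra with flat diagonal that cannot arise as a colimit of \'etale algebras. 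So your instinct about ``mismatched \'etale covers at the node'' is on target; the point is that with a single node one does not get enough room to mismatch, which is why the Stacks example uses a curve with two singular points rather than $k[x,y]/(xy)$. Once that reference is plugged in, your argument is identical to the paper's.
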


\begin{proof}
We take $A = \mathbf{C}[x,y]$ and $I = ((y-x^2+1)(y+x^2-1)).$ In \cite[\href{https://stacks.math.columbia.edu/tag/09AP}{Tag 09AP}]{stacks-project} we find an example of a weakly \'etale $A/I$-algebra $\overline{B}$ which is not ind-\'etale. If there were a lift to a weakly \'etale $A$-algebra $B$ then $A \to B$ would necessarily be ind-\'etale by Theorem \ref{thm-weakly-etale-over-regular}. But then $A/I \to B/IB = \overline{B}$ would be ind-\'etale also, a contradiction.
\end{proof}

\bibliographystyle{alpha}
\bibliography{references}

\textsc{Columbia University Department of Mathematics, 2990 Broadway, New York, NY 10027}

Email: \href{mailto:dejong@math.columbia.edu}{dejong@math.columbia.edu}

\vspace{.5em}

\textsc{Columbia University Department of Mathematics, 2990 Broadway, New York, NY 10027}

Email: \href{mailto:nolander@math.columbia.edu}{nolander@math.columbia.edu}

\end{document}